\newtheorem{theorem}{Theorem}
\newtheorem{corollary}{Corollary}
\newtheorem{example}{Example}
\newtheorem{lemma}{Lemma}
\newtheorem{remark}{Remark}
\numberwithin{equation}{section}
\DeclareMathOperator{\argmin}{argmin}
\DeclareMathOperator{\rank}{rank}
\title{Low-Rank Matrix Approximation \\ with Weights or Missing Data is NP-hard}
\author{\normalsize Nicolas Gillis${}^1$ and Fran\c{c}ois Glineur${}^1$}
\date{}
\begin{document}
\renewcommand{\labelitemi}{$\diamond$}

\maketitle

\begin{abstract}
Weighted low-rank approximation (WLRA), a dimensionality reduction technique for data analysis, has been successfully used in several applications, such as in collaborative filtering to design recommender systems or in computer vision to recover structure from motion. 
In this paper, we prove that computing an optimal weighted low-rank approximation is NP-hard, already when a rank-one approximation is sought. In fact, we show that it is hard to compute approximate solutions to the WLRA problem with some prescribed accuracy. 
Our proofs are based on reductions  from the maximum-edge biclique problem, and apply to strictly positive weights as well as to binary weights (the latter corresponding to low-rank matrix approximation with missing data).  
\bigskip

\noindent {\bf Keywords:} low-rank matrix approximation, weighted low-rank approximation, missing data, matrix completion with noise, PCA with missing data, computational complexity, maximum-edge biclique problem. 
\end{abstract}

\footnotetext[1] {Universit\'e catholique de Louvain, CORE, B-1348 Louvain-la-Neuve, Belgium. 
E-mail: nicolas.gillis@uclouvain.be and francois.\mbox{glineur}@uclouvain.be.  Nicolas Gillis is a research fellow of the Fonds de la Recherche Scientifique (F.R.S.-FNRS).  This text presents research results of the Belgian Program on Interuniversity Poles of Attraction initiated by the Belgian State, Prime Minister's Office, Science Policy
Programming. The scientific responsibility is assumed by the authors.}

\section{Introduction} 

Approximating a matrix with one of lower rank is a key problem in data analysis and 
is widely used for linear dimensionality reduction. 
Numerous variants exist emphasizing different constraints and objective functions, e.g., principal component analysis (PCA)~\cite{J86}, independent component analysis~\cite{C94}, nonnegative matrix factorization~\cite{LS1}, and other refinements are often imposed on these models, e.g., sparsity to improve interpretability or increase compression~\cite{AE07}.

In some cases, it may be necessary to attach a weight to each entry of the data matrix, expressing its relative importance \cite{GZ79}. This is for example the case in the following situations:
\begin{itemize}

\item The matrix to be approximated is obtained via a sampling procedure and the number of samples and/or the expected variance vary among the entries. For example, it has been shown that using a weighted norm gives better results in 2-D digital filter design \cite{LPW97} and microarray data analysis \cite{MN09}.  



\item Some data is missing/unknown, which can be taken into account by assigning zero weights to the missing/unknown entries of the data matrix. 
This is for example the case in collaborative filtering, notably used to design recommender systems \cite{Sar} (in particular, the Netflix prize competition has demonstrated the effectiveness of low-rank matrix factorization techniques \cite{KBV09}), or in computer vision to recover structure from motion \cite{SIR95, J01}, see also \cite{C08}. This problem is often referred to as \emph{PCA with missing data} \cite{SIR95, GM98}, and can be viewed as a  \emph{low-rank matrix completion problem with noise}, i.e., approximate a given noisy data matrix featuring missing entries with a low-rank matrix\footnote{In our settings, the rank of the approximation is  fixed a priori.}. 

\item  A greater emphasis must be placed on the accuracy of the approximation on 
a localized part of the data, a situation encountered for example in image processing \cite[Chapter 6]{diep}. \\ 
\end{itemize}

Finding a low-rank matrix which is closest to the input matrix according to these weights is an optimization problem called \emph{weighted low-rank approximation} (WLRA). Formally, it can be formulated as follows: first, given an $m$-by-$n$ nonnegative weight matrix $W \in \mathbb{R}^{m \times n}_+$, we define the weighted Frobenius norm
of  an $m$-by-$n$ matrix $A$ as  $|| A ||_W = (\sum_{i,j} W_{ij} A_{ij}^2)^\frac12$. 
Then, given an $m$-by-$n$ real matrix $M \in \mathbb{R}^{m \times n}$ and a positive integer $r \leq \min(m,n)$, we seek an $m$-by-$n$ matrix ${X}$ with rank at most $r$ that approximates $M$ as closely as possible, where the quality of the approximation is measured by the weighted Frobenius norm of the error: 
\[ 
p^* = \inf_{X  \in \mathbb{R}^{m \times n}} ||M - X||_W^2 \text{ such that $X$ has rank at most $r$}. 
\]
Since any $m$-by-$n$ matrix with rank at most $r$ can be expressed as the product of two matrices of dimensions $m$-by-$r$ and $r$-by-$n$, we will use the following more convenient formulation featuring two unknown matrices $U \in \mathbb{R}^{m \times r}$ and $V \in \mathbb{R}^{n \times r}$ but no explicit rank constraint:
\begin{equation} 
p^* = \inf_{
U  \in \mathbb{R}^{m \times r}, V \in \mathbb{R}^{n \times r}} \quad
||M-UV^T||_W^2 = \sum_{ij} W_{ij} (M-UV^T)_{ij}^2\;. \label{WLRA} \tag{WLRA}\\
\nonumber
\end{equation}
Even though \eqref{WLRA} is suspected to be NP-hard \cite{J01, SJ04}, this has never, to the best of our knowledge, been studied formally. In this paper, we analyze the computational complexity in the rank-one case (i.e., for $r=1$) and prove the following two results. 

\begin{theorem} \label{WLRAnphard}
When $M \in \{0,1\}^{m \times n}$, and $W \in \; ]0,1]^{m \times n}$, it is NP-hard to find an approximate solution of rank-one \eqref{WLRA} with objective function accuracy less than $2^{-11}(mn)^{-6}$. 
\end{theorem}

\begin{theorem} \label{LRAMDnphard} 
When $M \in [0,1]^{m \times n}$, and $W \in \{0,1\}^{m \times n}$, it is NP-hard to find an approximate solution of rank-one \eqref{WLRA} with objective function accuracy less than $2^{-12}(mn)^{-7}$. \\
\end{theorem}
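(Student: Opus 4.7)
The plan is to reduce from the maximum-edge biclique problem (MEB), which is known to be NP-hard: given a bipartite graph $G = (V_1 \cup V_2, E)$ with $m = |V_1|$ and $n = |V_2|$, determine the largest value of $|A| \cdot |B|$ with $A \times B \subseteq E$. I would construct a rank-one WLRA instance in which $W$ encodes (possibly augmented by auxiliary rows/columns) the bipartite adjacency matrix of $G$, and $M$ is chosen so that an optimal rank-one approximation encodes a maximum biclique of $G$.

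A natural starting point is to take $W \in \{0,1\}^{m \times n}$ as the adjacency matrix of $G$ and $M \equiv 1$: the WLRA objective becomes $f(u,v) = \sum_{(i,j) \in E}(1 - u_i v_j)^2$. Any biclique $A \times B \subseteq E$ of size $K$ yields the feasible solution $(u,v) = (\mathbf{1}_A, \mathbf{1}_B)$ with $f(u,v) = |E| - K$, so $p^* \le |E| - K^*$ where $K^*$ is the maximum biclique size. In the reverse direction, given $(u,v)$ with $f(u,v) \le |E| - K^* + \epsilon$ for $\epsilon$ sufficiently small, I would extract a biclique of size at least $K^*$ in $G$, thereby deciding MEB. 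Assuming $u, v \ge 0$ (justified since $M_{ij} \ge 0$ on the weighted entries), thresholding by $I_\alpha = \{i : u_i \ge \alpha\}$ and $J_\alpha = \{j : v_j \ge 1/(2\alpha)\}$ produces a rectangle $I_\alpha \times J_\alpha$ on which $u_i v_j \ge 1/2$. Since each edge $(i,j) \in E$ with $u_i v_j < 1/2$ contributes at least $1/4$ to $f$, the bound on the objective forces most edges to have $u_i v_j \ge 1/2$; optimizing over $\alpha$ by a pigeonhole-style argument then produces a rectangle of size close to $K^*$.

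The main obstacle is specific to the binary-weight regime: zero weights impose no penalty on $u_i v_j$ at non-edges, so the thresholded rectangle $I_\alpha \times J_\alpha$ may contain non-edges and thus fail to be a biclique in $G$. To force consistency, I expect the construction to augment the instance with auxiliary rows and columns, carrying weight $1$ and target value $0$ (permissible since $M \in [0,1]$), designed so that the optimal $(u,v)$ must nearly vanish on non-edges; this converts the problem into one behaving like standard rank-one nonnegative matrix factorization on the adjacency matrix, for which the biclique connection is well-behaved. The final step is a bookkeeping computation: tracking the polynomial accumulation of error through the rounding procedure, and using the integrality of $K^*$ (so that recovering a biclique of size strictly greater than $K^*-1$ suffices), yields the explicit additive-accuracy bound $\epsilon < 2^{-12}(mn)^{-7}$ below which an $\epsilon$-approximate WLRA solver would decide MEB exactly.
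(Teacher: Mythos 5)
Your high-level plan --- reduce from the maximum-edge biclique problem, recognize that zero weights at non-edges are the obstacle, and repair this with auxiliary rows and columns --- is the same strategy the paper follows, and your thresholding argument for extracting a biclique from a near-optimal $(u,v)$ is in the spirit of the paper's Lemma~\ref{keylem}. However, the proposal stops exactly where the real work begins. Note first that your base instance ($W$ the adjacency matrix, $M\equiv 1$) has optimal value $0$ (take $u=\mathbf{1}_m$, $v=\mathbf{1}_n$), so the entire content of the reduction must live in the augmentation --- and the only property you specify for the auxiliary rows/columns, namely that they carry ``weight $1$ and target value $0$'', cannot force anything: setting the auxiliary coordinates of $u$ and $v$ to zero satisfies all such constraints exactly and leaves $u_iv_j$ at non-edges completely unconstrained. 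What makes the paper's gadget work is a \emph{large} entry. For each non-edge $(i,j)$ of $M_b$ one adds an auxiliary row and column indexed by $k=k_{ij}$, puts $M(s+k,t+k)=d$ with weight $1$ (so that any solution with objective at most $|E|$ has $u^{(d)}_{k}v^{(d)}_{k}\ge d-\sqrt{|E|}$, hence one of $|u^{(d)}_{k}|$, $|v^{(d)}_{k}|$ at least $(d-\sqrt{|E|})^{1/2}$), and couples this forcedly large coordinate back to row $i$ and column $j$ via weight-$1$ zero targets at positions $(i,t+k)$ and $(s+k,j)$; only then does the $i$-th row or the $j$-th column of $u^{(b)}{v^{(b)}}^{T}$ become $O\big(|E|^{3/4}(d-\sqrt{|E|})^{-1/2}\big)$ (Lemma~\ref{lembic2}), which is exactly the hypothesis Lemma~\ref{keylem} needs. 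With targets confined to $\{0,1\}$ the available slack $\sqrt{|E|}$ swamps a unit diagonal target, the auxiliary coordinates need not be bounded away from zero, and the gadget as you describe it does nothing.

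This missing ingredient is also why your accuracy bound is unsupported: the large entry $d$ must afterwards be scaled away by replacing $M$ with $M/d$ so that $M\in[0,1]^{m\times n}$, which multiplies the objective by $d^{-2}$, and the stated threshold $2^{-12}(mn)^{-7}$ is precisely $1/(2d^2)$ for the paper's choice $d=2^5(mn)^{7/2}+\sqrt{mn}$. Without the gadget and this rescaling there is no route to the exponent $7$. Two further points your sketch leaves open: the upper bound $p^*\le|E|-|E^*|$ is not immediate here because the infimum with binary weights may be unattained (the paper constructs an explicit feasible family whose objective tends to $|E|-|E^*|$, Lemma~\ref{optp}), and the thresholding step needs an a priori bound on $\|u^{(b)}\|_2\|v^{(b)}\|_2$ (obtained from the triangle inequality on the upper-left block) before a uniform threshold can be chosen.
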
 
In other words, it is NP-hard to find an approximate solution to 
rank-one \eqref{WLRA} with positive weights, and to 
the rank-one matrix approximation problem with missing data. 
Note that these results can be easily generalized to any fixed rank $r$, see Remark~\ref{Remnmu}.

The paper is organized as follows. 
We first review existing results about the complexity of \eqref{WLRA} in Section~\ref{formu}. 
In Section~\ref{MBPsec}, we introduce the maximum-edge biclique problem (MBP), which is NP-hard.  
In Sections~\ref{WLRAcomp} and \ref{LRMD}, we prove Theorems~\ref{WLRAnphard} and \ref{LRAMDnphard} respectively, using polynomial-time reductions from MBP. We conclude with a discussion and some open questions. \\



\subsection{Notation} 
The set of real $m$-by-$n$ matrices is denoted $\mathbb{R}^{m \times n}$, or $\mathbb{R}^{m \times n}_+$ when all the entries are required to be nonnegative. 
For $A \in \mathbb{R}^{m \times n}$, 
we note $A_{:j}$ the $j^{\text{th}}$ column of $A$, 
$A_{i:}$ the $i^{\text{th}}$ row of $A$,  
and $A_{ij}$ or $A(i,j)$ the entry at position $(i,j)$; 
for $b \in \mathbb{R}^{m \times 1} = \mathbb{R}^{m}$, 
we note $b_i$ 
the $i^{\text{th}}$ entry of $b$.  
The transpose of $A$ is $A^T$. 
The Frobenius norm of a matrix $A$ is defined as $||A||_F^2 = \sum_{i,j} (A_{ij})^2$, and $||.||_2$ is the usual Euclidean norm  with $||b||_2^2 = {\sum_{i} b_i^2}$.  For $W \in \mathbb{R}^{m \times n}_+$, the weighted Frobenius `norm' of a matrix $A$ is  defined\footnote{$||.||_W$ is a matrix norm if and only if $W > 0$, else it is a semi-norm.} 
by $||A||_W^2 = \sum_{i,j} W_{ij} (A_{ij})^2$. 
The $m$-by-$n$ matrix of all ones is denoted $\mathbf{1}_{m \times n}$, the $m$-by-$n$ matrix of all zeros  $\mathbf{0}_{m \times n}$, and $I_{n}$ is the identity matrix of dimension $n$. The smallest integer larger or equal to $x$ is denoted $\lceil x \rceil$.

\section{Previous Results} \label{formu}


Weighted low-rank approximation is suspected to be much more difficult than the corresponding unweighted problem (i.e., when $W$ is the matrix of all ones), which 
is efficiently solved using the singular value decomposition (SVD) \cite{Gol}.  
In fact, it has been previously observed that the weighted problem might have several local minima which are not global \cite{SJ04}, while this cannot occur in the unweighted case (i.e., when $W$ is the matrix of all ones), see, e.g., \cite[p.29, Th.1.14]{diep}. 
\begin{example} \label{examp} Let 
\begin{displaymath}
M = \left( \begin{array}{ccc}
     1  &   0 &    1 \\
     0    & 1  &   1\\
     1   &  1  &   1\\\end{array} \right), \quad \textrm{ and }  \quad
W = \left( \begin{array}{ccc}
     1  &   100 &    2 \\
     100   & 1  &   2\\
     1   &  1  &   1\\ \end{array} \right).
\end{displaymath}
In the case of a rank-one factorization ($r=1$) and a nonnegative matrix $M$, one can impose without loss of generality that the solutions of \eqref{WLRA} are nonnegative. 
In fact, one can easily check that any rank-one solution $uv^T$ of \eqref{WLRA} can only be improved by taking its component-wise absolute value $|uv^T| = |u||v|^T$. Moreover, we can impose without loss of generality that $||u||_2 = 1$, so that only two degrees of freedom remain. Indeed,  for a given 
\[
u(x,y) = \left( \begin{array}{c}
     x   \\
     y    \\
     \sqrt{1-x^2-y^2} \\ \end{array} \right), 
     \; \textrm{ with } \,
     \left\{ \begin{array}{l}
x \geq 0, y \geq 0 \\
x^2 +  y^2   \leq 1
\end{array} \right. ,
\] 
the corresponding optimal $v^*(x,y) = \argmin_{v} ||M-u(x,y)v^T||_W^2$ can be computed easily\footnote{This problem can be decoupled into $n$ independent quadratic programs in one variable, and admits the following closed-form solution: $v^*(x,y) = [(M \circ W)^T u] /. [W^T (u \circ u)]$, where $\circ$ (resp.\@ $/.$) is the component-wise multiplication (resp.\@ division).}. 
Figure \ref{localmin} displays the graph of the objective function $||M-u(x,y) v^*(x,y)^T||_W$ with respect to parameters $x$ and $y$; we observe four local minima, close to $(\frac{\sqrt{2}}{2},0)$, $(0,\frac{\sqrt{2}}{2})$, $(0,0)$ and $(\frac{\sqrt{2}}{2},\frac{\sqrt{2}}{2})$.
\begin{figure}[ht!]
\begin{center}
\includegraphics[width=12cm]{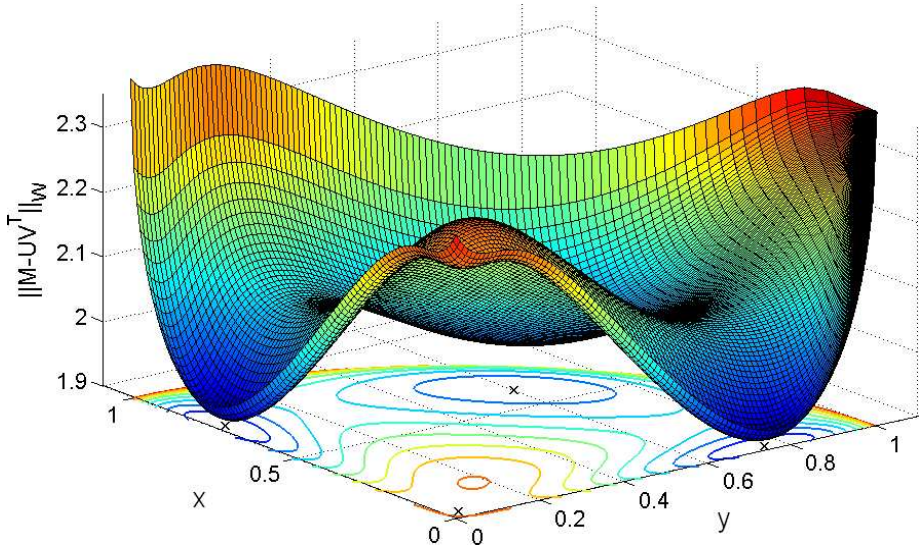}
\caption{Objective function of \eqref{WLRA} with respect to the parameters $(x,y)$.}
\label{localmin}
\end{center}
\end{figure}
We will see later in Section~\ref{complex} how this example has been generated. \vspace{0.2cm}
\end{example}

However, if the rank of the weight matrix $W \in \mathbb{R}^{m \times n}_+$ is equal to one, i.e., $W = st^T$ for some $s \in \mathbb{R}^m_+$ and $t \in \mathbb{R}^n_+$,  \eqref{WLRA} can be reduced to an unweighted low-rank approximation. In fact,
\begin{eqnarray*}
||M-UV^T||_W^2
& = & \sum_{i,j} W_{ij} \, (M-UV^T)_{ij}^2  = \sum_{i,j} s_i t_j \, (M-UV^T)_{ij}^2 \\
& = & \sum_{i,j}
\Big( \sqrt{s_it_j} \, M_{ij} - (\sqrt{s_i}\,U_{i:})(\sqrt{t_j}\,V_{j:}^T)\Big)^2. 
\end{eqnarray*}
Therefore, if we define a matrix $M'$ such that $M'_{ij} = \sqrt{s_it_j} \, M_{ij}$ $\forall i,j$, an optimal weighted low-rank approximation $(U,V)$ of $M$ can be recovered from a solution $(U',V')$ to the unweighted problem for matrix $M'$ using $U_{i:}= U'_{i:}/\sqrt{s_i}$ $\forall i$ and $V_{j:}= V'_{j:}/\sqrt{t_j}$ $\forall j$.\\



When the weight matrix $W$ is binary, WLRA amounts to approximating a matrix with missing data.
This problem is closely related to 
 \emph{low-rank matrix completion}, see~\cite{CR09} and the references therein, which can be defined as
\begin{equation}
\min_{X} \; \rank(X) \quad 
\text{ such that } 
X_{ij} = M_{ij} \text{ for } (i,j) \in \Omega, \label{MC} \tag{MC}
\end{equation}
where $\Omega \subseteq \{1,2,\dots,m\} \times \{1,2,\dots,n\}$ is the set of entries for which the values of $M$ are known. 
\eqref{MC} has been shown to be  NP-hard \cite{CG84}, and it is clear that an optimal solution $X^*$ of \eqref{MC} can be obtained by solving a sequence of \eqref{WLRA} problems with the same matrix $M$, with 
\[
W_{ij} = \left\{ 
\begin{array}{ll}
1 & \textrm{if } (i,j) \in \Omega \\
0 & \textrm{otherwise}
\end{array} \right. ,
\]
and for different values of the target rank ranging from $r = 1$ to $r = \min(m,n)$. The smallest value of $r$ for which the objective function $||M-UV^T||_W^2$ of \eqref{WLRA} vanishes provides an optimal solution for \eqref{MC}. 
This observation implies that it is NP-hard to solve \eqref{WLRA} for each possible value of $r$ from $1$ to $\min(m,n)$, since it would solve \eqref{MC}. However, this does not imply that \eqref{WLRA} is NP-hard when $r$ is fixed, and in particular when $r$ equals one. In fact, checking whether \eqref{MC} admits a rank-one solution can be done easily\footnote{The solution $X = uv^T$ can be constructed  observing that the vector $u$ must be a multiple of each column of $M$.}. 

Rank-one \eqref{WLRA} can be equivalently reformulated as 
\[
\inf_{A} ||M-A||_W^2 \quad \text{ such that } \quad \rank(A) \leq 1, 
\]
and, when $W$ is binary, is the problem of finding, if possible, the best rank-one approximation of a matrix with missing entries. 
To the best of our knowledge, the complexity of this problem has never been studied  formally; it will be shown to be NP-hard in the next section. \\

Another closely related result is the NP-hardness of the structure from motion problem (SFM), in the presence of noise and missing data \cite{NKS07}. Several points of a rigid object are tracked with cameras (we are given the projections of the 3-D points on the 2-D camera planes)\footnote{Missing data arise because the points may not always be visible by the cameras, e.g., in the case of a rotation.}, 
and the aim is to recover the structure of the object and the positions of the 3-D points. SFM can be written as a rank-four \eqref{WLRA} problem with a binary weight matrix\footnote{With the additional constraint that the last row of $V$ must be all ones, i.e., $V_{r:} = \mathbf{1}_{1 \times n}$.} \cite{J01}. 
However, this result does not imply anything on the complexity of rank-one \eqref{WLRA}. \\



An important feature of \eqref{WLRA} is exposed by the following example. 
\begin{example} \label{ex2}
Let 
\[
M = \left( \begin{array}{cc} 1 & ? \\ 0 & 1 \end{array} \right), 
\]
where ? indicates that an entry is missing, i.e., that the weight associated with this entry is 0 (1 otherwise). Observe that $\forall (u,v) \in \mathbb{R}^m \times \mathbb{R}^n$,
\[
\textrm{rank}(M) = 2 \; \textrm{ and } \; \textrm{rank}(uv^T) = 1 \quad \Rightarrow \quad ||M-uv^T||_W > 0.
\]
However, we have
\[
\inf_{(u,v) \in \mathbb{R}^m \times \mathbb{R}^n} ||M-uv^T||_W = 0.
\]
In fact, one can check that with
\[ 
u(\epsilon) = \left( \begin{array}{c} 1 \\ \epsilon \end{array} \right) \textrm{ and } \;
v(\epsilon) = \left( \begin{array}{c} 1 \\ \epsilon^{-1} \end{array} \right), \textrm{ we have }
\lim_{\epsilon \rightarrow 0} ||M-u(\epsilon)v(\epsilon)^T||_W = 0.
\]
\end{example} 
This indicates that, when $W$ has zero entries, the set of optimal solutions of \eqref{WLRA} might be empty. In other words, the (bounded) infimum of the objective function might be unattained. 
On the other hand, the infimum is always attained for $W > 0$ since $||.||_W$ is then a norm. 

For this reason, these two cases will be analyzed separately: in Section~\ref{WLRAcomp}, we study the computational complexity of the problem when $W > 0$, and, in Section~\ref{LRMD},  the case of a binary $W$ (i.e., the problem with missing data).


\section{Complexity of rank-one \eqref{WLRA}} \label{complex}

In this section, we use  polynomial-time reductions from the maximum-edge biclique problem to prove Theorems~\ref{WLRAnphard} and \ref{LRAMDnphard}.

\subsection{Maximum-Edge Biclique Problem}  \label{MBPsec}

A \emph{bipartite graph} is a graph whose vertices can be partitioned into two disjoint sets such that there is no edge between two vertices in the same set. The maximum-edge biclique problem (MBP) in a bipartite graph is the problem of finding a complete bipartite subgraph (a \emph{biclique}) with the maximum number of edges. 

Let $M \in \{ 0,1 \}^{m \times n}$ be the biadjacency matrix of a bipartite graph 
$G_b = (V_1 \cup V_2,E)$ with $V_1 = \{s_1, \dots s_m\}$,  
$V_2 = \{t_1, \dots t_n\}$ and $E \subseteq (V_1 \times V_2)$ , i.e., 
\[
M_{ij} = 1 \quad \iff \quad (s_i,t_j) \in E.
\]
The cardinality of $E$ will be denoted $|E| = ||M||_F^2 \leq mn$. \\



For example, Figure~\ref{graph} displays the graph $G_b$ generated by the matrix $M$ of Example~\ref{examp}.  
\begin{figure}[ht!]
\begin{center}
\includegraphics[width=3cm]{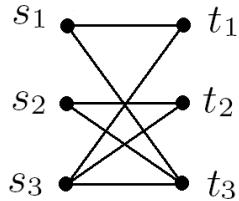}
\caption{Graph corresponding to the matrix $M$ of Example~\ref{examp}.}
\label{graph}
\end{center}
\end{figure}

\noindent With this notation, the maximum-edge biclique problem in a bipartite graph can be formulated as follows \cite{GG09} 
\begin{align}
 \min_{u, v} \qquad & ||M-uv^T||_F^2 \nonumber \\
&  u_i v_j \leq M_{ij}, \; \forall \, i ,j \label{MBP}  \tag{MBP} \\
&  u \in \{ 0,1 \}^{m}, v \in \{ 0,1 \}^{n},  \nonumber
\end{align}
where $u_i = 1$ (resp.\@ $v_j = 1$) means that node $s_i$ (resp.\@ $t_j$) belongs to the solution, $u_i = 0$ (resp.\@ $v_j = 0$) otherwise. The first constraint guarantees feasible solutions of \eqref{MBP} to be bicliques of $G_b$. In fact, it is equivalent to the implication 
\[
M_{ij} = 0 \quad \Rightarrow \quad u_i = 0 \; \textrm{ or } \; v_j = 0,
\]
i.e., if there is no edge between vertices $s_i$ and $t_j$, they cannot simultaneously belong to a solution. 
The objective function minimizes the number of edges outside the biclique, which is equivalent to maximizing the number of edges inside the biclique. Notice that the minimum of \eqref{MBP} is $|E|-|E^*|$, where $|E^*|$ denotes the number of edges in an optimal biclique. 

The decision version of the MBP problem: 
\begin{quote}
\textit{Given $K$, does $G_b$ contain a biclique with at least $K$ edges?} 
\end{quote}
has been shown to be NP-complete \cite{Peet} in the usual Turing machine model \cite{Gar}, which is our framework in this paper. Therefore, computing $|E|-|E^*|$, the optimal value of \eqref{MBP}, is NP-hard.

\subsection{Low-Rank Matrix Approximation with Positive Weights} \label{WLRAcomp}

In order to prove NP-hardness of rank-one \eqref{WLRA} with positive weights ($W > 0$), let us consider the  following instance:  
\begin{equation} \tag{W-1d}  \label{WLRA1} 
p^* = \min_{u \in \mathbb{R}^{m}, v \in \mathbb{R}^{n}}  ||M-uv^T||_{W}^2, 
\end{equation}
with $M \in \{ 0,1 \}^{m \times n}$ the biadjacency matrix of a bipartite graph $G_b = (V,E)$  
and the weight matrix defined as
\[
W_{ij} = \left\{ 
\begin{array}{ll}
1 & \textrm{if } M_{ij} = 1 \\
d & \textrm{if } M_{ij} = 0
\end{array} \right. , \; 1 \leq i \leq m, 1 \leq j \leq n, 
\]
where $d \geq 1$ is a parameter. 

Intuitively, increasing the value of $d$ makes the zero entries of $M$ more important in the objective function, which leads them to be approximated by small values. This observation will be used to show that, for $d$ sufficiently large, the optimal value $p^*$ of \eqref{WLRA1} will be close to $|E|-|E^*|$, the optimal value of \eqref{MBP} (Lemma~\ref{optWLRA}). 

A maximal biclique in $G_b$ is a biclique not contained in a larger biclique, and can be seen as a `locally' optimal solutions of  \eqref{MBP}. We will show that, as the value of parameter $d$ increases, the local minima of \eqref{WLRA1} get closer to binary vectors describing maximal bicliques in $G_b$.

Example~\ref{examp} illustrates the situation: the graph  $G_b$ corresponding to matrix $M$ (cf.\@ Figure~\ref{graph}) contains four maximal bicliques $\{s_1,s_3,t_1,t_3\}$, $\{s_2,s_3,t_2,t_3\}$, $\{s_3,t_1,t_2,t_3\}$ and  $\{s_1,s_2,s_3,t_3\}$, and the weight matrix $W$ that was used is similar to the case $d=100$ in problem~\eqref{WLRA1}. We now observe that \eqref{WLRA1} has four local optimal solutions as well (cf.\@ Figure~\ref{localmin}) close to $(\frac{\sqrt{2}}{2},0)$, $(0,\frac{\sqrt{2}}{2})$, $(0,0)$ and $(\frac{\sqrt{2}}{2},\frac{\sqrt{2}}{2})$. There is a one to one correspondence between these solutions and the four maximal bicliques listed above (in this order). 
For example, for $(x,y)=(\frac{\sqrt{2}}{2},0)$ we have $u(x,y) = (\frac{\sqrt{2}}{2}, 0, \frac{\sqrt{2}}{2})^T$, 
 $v^*(x,y)$ is approximately equal to $({\sqrt{2}}, 0, {\sqrt{2}})^T$, and this solution corresponds to the maximal biclique $\{s_1,s_3,t_1,t_3\}$. 
 
Notice that a similar idea was used  in \cite{GG08} to prove NP-hardness of the rank-one nonnegative factorization problem $\min_{u \in \mathbb{R}_+^m, v \in \mathbb{R}_+^n} ||M-uv^T||_F$, where the zero entries of $M$ were replaced by sufficiently large negative ones. 
\begin{remark}[Link with classical quadratic penalty method]  
It is worth noting that \eqref{WLRA1} can be viewed as the application of the classical quadratic {penalty approach} to the biclique problem, see, e.g., \cite[\S 17.1]{NR06}. In fact, defining $F = \{ (i,j) | M_{ij} = 1 \}$ and its complement $\bar{F} = \{ (i,j) | M_{ij} = 0 \}$, the biclique problem can be formulated as 
\begin{equation} \label{bicky}
\min_{u, v} \sum_{(i,j) \in F} (1-u_iv_j)^2 \;\; \text{ such that } u_iv_j = 0 \text{ for all } (i,j) \in \bar{F}. 
\end{equation}
Indeed, in this formulation, it is clear that any optimal solution can be chosen such that vectors $u$ and $v$ are binary, from which the equivalence with problem \eqref{MBP} easily follows.  Penalizing (quadratically) the equality constraints in the objective, we obtain 
 \[
 P_d(u,v) = \sum_{(i,j) \in F} (1-u_iv_j)^2 +  d \sum_{(i,j) \in \bar{F}} (u_iv_j)^2, 
\]
where $d \geq 0$ is the penalty parameter. We now observe that our choice of $W$ at the beginning of this section gives $P_d(u,v) = ||M-uv^T||_W^2$, i.e.,  \eqref{WLRA1} is exactly equivalent to minimizing $P_d(u,v)$. This implies that, as $d$ grows, minimizers of problem \eqref{WLRA1} will tend to solutions of the biclique problem \eqref{MBP}. Our goal is now to prove a more precise statement about the link between these two problems: we provide (in Lemma~\ref{optWLRA}) an explicit value for $d$ that guarantees a small difference between the optimal values of these two problems.
\vspace{0.1cm} 
%
%
\end{remark}

First, we establish that for any $(u,v)$ such that $||M-uv^T||_W^2 \leq |E|$, the absolute value of the row or the column of $uv^T$ corresponding to a zero entry of $M$ must be smaller than a constant inversely proportional to $\sqrt[4]{d}$. 
\begin{lemma} \label{lembic}
Let $(i,j)$ be such that $M_{ij} = 0$, then 
$\forall (u,v)$ such that $||M-uv^T||_W^2 \leq |E|$, 
\[ 
\min\Big( \, \max_{1 \leq k \leq n} |u_i v_k|, \, \max_{1 \leq p \leq m} |u_p v_j| \Big) 
\leq \sqrt[4]{\frac{4|E|^2}{d}}.  
\]
\end{lemma}
\begin{proof} 
Without loss of generality $u$ and $v$ can be scaled such that \@ $||u||_2=||v||_2$ without changing the product $uv^T$, i.e., we replace $u$ by $u' = \sqrt{\frac{||v||_2}{||u||_2}} u$ and $v$ by 
$v' = \sqrt{\frac{||u||_2}{||v||_2}} v$ so that $||u'||_2=||v'||_2 = \sqrt{||u||_2||v||_2}$ and $u'v'^T = uv^T$. 
First, observe that since $||.||_W$ is a norm, 
\[
||uv^T||_W - \sqrt{|E|} = ||uv^T||_W - ||M||_W \leq ||M-uv^T||_W \leq \sqrt{|E|}.
\] 
Since all entries of $W$ are larger than 1 ($d \geq 1$), we have 
\[
||u||_2 ||v||_2 = ||uv^T||_F \leq  ||uv^T||_W \leq \sqrt{4|E|}, 
\] 
and then $||u||_2 = ||v||_2 \leq \sqrt[4]{4|E|}$. 

Moreover $d (0-u_iv_j)^2 \leq ||M-uv^T||_W^2 \leq |E|$, so that $|u_iv_j| \leq \sqrt{\frac{|E|}{d}}$ which implies that either 
$|u_i| \leq \sqrt[4]{\frac{|E|}{d}}$ 
or 
$|v_j| \leq \sqrt[4]{\frac{|E|}{d}}$. 
 Combining the above inequalities with the fact that $(\max_{1 \leq k \leq n} |v_k|)$ and $(\max_{1 \leq p \leq m} |u_p|)$ are bounded above by $||u||_2 = ||v||_2 \leq \sqrt[4]{4|E|}$ completes the proof. \vspace{0.2cm}
\end{proof}

We now prove the following general lemma which, combined with Lemma~\ref{lembic} above, will allow us to derive a lower bound on the objective function of \eqref{WLRA1} (it will also be used for the proof of the problem with missing data in Section~\ref{LRMD}). 


%
%
%
 
\begin{lemma} \label{keylem}  
Let $M \in \{0,1\}^{m \times n}$ be the biadjacency matrix of a bipartite graph $G_b = (V,E)$, $W \in \mathbb{R}_+^{m \times n}$ a weight matrix such that $W_{ij}=1$ for each pair $(i,j)$ satisfying $M_{ij}=1$, and $(u,v)$ be such that 
\begin{equation} \label{keypeqt}
\min\Big( \, \max_{1 \leq k \leq n} |u_i v_k|, \, \max_{1 \leq p \leq m} |u_p v_j| \Big) 
\leq c, 
\end{equation}
for each pair $(i,j)$ satisfying $M_{ij} = 0$, where $0 < c \leq 1$. Let also $p = |E|-|E^*|$ be the optimal objective function value of \eqref{MBP}. Then, if $p > 0$, we have 
\[
||M-uv^T||_W > p (1 - 2c). 
\] 
\end{lemma}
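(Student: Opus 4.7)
My plan is to extract from $(u,v)$ a biclique structure in $G_b$ via thresholding, bound its size by $|E^*|$, and then lower-bound $||M - uv^T||_W$ using the edges of $G_b$ that lie outside this biclique.

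First, I would define two index sets
\[
I = \{i \in \{1, \dots, m\} : \max_{1 \le k \le n} |u_i v_k| > c\}, \quad J = \{j \in \{1, \dots, n\} : \max_{1 \le p \le m} |u_p v_j| > c\},
\]
which collect the rows and columns of $uv^T$ containing a ``significant'' entry. The key structural fact is that $I \times J$ is contained in $E$: for any $(i,j) \in I \times J$, both maxima in \eqref{keypeqt} exceed $c$, so their minimum does as well, which by hypothesis rules out $M_{ij} = 0$ and therefore forces $M_{ij} = 1$. Thus $(I,J)$ spans a complete bipartite subgraph of $G_b$, so $|I| \cdot |J| \le |E^*|$.

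Next I would count the edges of $E$ not covered by this biclique. Since $|I \times J| \le |E^*|$, at least $|E| - |E^*| = p$ edges $(i,j) \in E$ satisfy $(i,j) \notin I \times J$, meaning $i \notin I$ or $j \notin J$. In the first case $|u_i v_j| \le \max_k |u_i v_k| \le c$, and in the second $|u_i v_j| \le \max_p |u_p v_j| \le c$; either way $|u_i v_j| \le c \le 1$, so $(1 - u_i v_j)^2 \ge (1 - c)^2$.

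Finally, since $W_{ij} = 1$ for every $(i,j) \in E$ and $W \ge 0$ elsewhere, I would assemble these ingredients into
\[
||M - uv^T||_W^2 \;\ge\; \sum_{M_{ij}=1} (1 - u_i v_j)^2 \;\ge\; p(1-c)^2,
\]
after which the stated bound follows by comparing $(1-c)^2$ with $(1-2c)^2$ (which is strictly smaller for $c \in (0, 2/3)$, yielding the strict inequality) and taking square roots.

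The step I expect to be the most delicate is matching the exact prefactor on the right-hand side: the direct $\ell_2$ accounting above naturally returns a bound of the form $\sqrt{p}\,(1-c)$, so recovering a bound linear in $p$ rather than in $\sqrt{p}$ may require either a sharper use of the spread of the at least $p$ ``bad'' edges across distinct rows and columns, or a slightly different threshold choice (e.g.\ thresholding at $2c$ so that $(1 - u_iv_j)^2 \ge (1-2c)^2$ directly on uncovered edges). The biclique-extraction step and the identification of $\ge p$ uncovered edges are the robust core of the argument.
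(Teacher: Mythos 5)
Your argument is exactly the paper's: the sets $I$ and $J$ you define are precisely the two factors of the set $\Omega_c(u,v)$ in the paper's proof, the containment $I\times J\subseteq E$ giving $|I|\cdot|J|\le|E^*|$ is the same biclique-extraction step, and the count of at least $p$ uncovered unit entries each contributing $(1-u_iv_j)^2\ge(1-c)^2$ yields the same bound $||M-uv^T||_W^2\ge p(1-c)^2$.

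The ``delicate step'' you flag at the end is not a real obstacle but an artifact of a typo in the statement: the intended conclusion (as the paper's own proof and its later use in Lemma~\ref{optWLRA} make clear) is $||M-uv^T||_W^2 > p(1-2c)$, i.e.\ the \emph{squared} weighted norm. With the square restored, your bound closes immediately, since $(1-c)^2 = 1-2c+c^2 > 1-2c$ for $c>0$ and hence $p(1-c)^2 > p(1-2c)$. Your proposed repair --- comparing $(1-c)^2$ with $(1-2c)^2$ and taking square roots --- would only give $||M-uv^T||_W \ge \sqrt{p}\,(1-c)$, which indeed does not imply the literal unsquared claim $||M-uv^T||_W > p(1-2c)$ for large $p$; no such step is needed, and you were right to be suspicious of it.
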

\begin{proof} 
Define the biclique corresponding to the following set $\Omega_{c}(u,v) \subseteq \{1,2,\dots,m\} \times \{1,2,\dots,n\}$ 
\begin{equation} 
\Omega_{c}(u,v) = \{ \,  i \ | \ \exists j \text{ s.t. } |u_iv_j| > c \, \} \times \{ \,  j \ | \ \exists i \text{ s.t. } |u_iv_j| > c \, \}. \nonumber 
\end{equation}
This biclique is part of the original graph, i.e., every edge in $\Omega_c(u,v)$ belongs to $G_b$. Indeed, if $M_{ij}=0$, the pair $(i,j)$ cannot belong to $\Omega_c(u,v)$ since, by Equation~\eqref{keypeqt}, the absolute value of either the $i{}^\textrm{th}$ row or the $j{}^\textrm{th}$ column of $u v^T$ is smaller than $c$. By construction, we also have that the entries $M_{ij}$ corresponding to pairs $(i,j)$ not in  the biclique $\Omega_{c}(u,v)$ are approximated by values smaller than $c$.  The error corresponding to a unit entry of $M_{ij}$ not in the biclique $\Omega_{c}(u,v)$ is then at least $(1-c)^2$ (because the corresponding weight $W_{ij}$ is equal to one). Since there are at least $p = |E|-|E^*|$ such entries (because there are $|E|$ unit entries in $M$ and at most $|E^*|$ pairs in biclique $\Omega_c(u,v)$), we have 
\begin{equation} \nonumber 
||M-uv^T||_W^2 \geq (1-c)^2 p > p (1-2c) = p - 2pc. 
\end{equation}
\end{proof}

We can now provide lower and upper bounds on the optimal value $p^*$ of \eqref{WLRA1}, and show that it is not too different from the optimal value $|E| - |E^*|$ of \eqref{MBP}. 
\begin{lemma} \label{optWLRA}  
Let $0 < \epsilon \leq 1$. For any value of parameter $d$ such that $d \geq \frac{2^6 |E|^6}{\epsilon^4}$, the optimal value $p^*$ of \eqref{WLRA1} satisfies 
\[
|E| - |E^*| - \epsilon < p^* \leq |E| - |E^*|. 
\]
\end{lemma}
\begin{proof} 
Let $(u,v)$ be an optimal solution of \eqref{WLRA1} (since $W > 0$, there always exists at least one optimal solution,  cf.\@ Section~\ref{formu}), and let us note $p = |E|-|E^*| \geq 0$. If $p = 0$, then $p^* = 0$ and the result is trivial (it is the case when the rank of $M$ is one, i.e., $G_b$ contains only one biclique). Otherwise, since any optimal solution of \eqref{MBP} plugged in \eqref{WLRA1} achieves an objective function equal to $p$, we must have 
\[
p^* = ||M-uv^T||_W^2 \leq p = |E| - |E^*|, 
\]
which gives the upper bound. 

Since $d$ is greater than $4|E|^2$ for any $0 < \epsilon \leq 1$, the constant $\alpha = \sqrt[4]{\frac{4|E|^2}{d}}$ appearing in Lemma~\ref{lembic} is smaller than one. This means that  Lemma~\ref{keylem} is applicable, so that we have  
\[
||M-uv^T||_W^2 > p - 2\alpha p \geq p - 2 \alpha |E| \geq p - \epsilon, 
\]
which gives the lower bound (the last inequality follows from the fact that $2 \alpha |E| \leq \epsilon$ is equivalent to the condition $d \geq \frac{2^6|E|^6}{\epsilon^4}$). 
\end{proof}

This result implies that for $\epsilon = 1$, i.e., for $d \geq (2|E|)^6$, 
we have $|E|-|E^*|-1 < p^* \leq |E|-|E^*|$,  
and therefore computing $p^*$ exactly would allow to recover $|E^*|$ (since 
$|E^*| = |E|-\lceil p^* \rceil$), 
which is NP-hard. Since the reduction from \eqref{MBP} to \eqref{WLRA1} is polynomial (it uses the same matrix $M$ and a weight matrix $W$ whose description has polynomial length), we conclude that solving \eqref{WLRA1} exactly is NP-hard.
The next result shows that even solving \eqref{WLRA1} approximately is NP-hard.

\begin{corollary} \label{th1cor}
For any $d \geq (2mn)^6$,  $M \in \{0,1\}^{m \times n}$ and $W \in \{1,d\}^{m \times n}$, it is NP-hard to find an approximate solution of rank-one  \eqref{WLRA} with objective function accuracy less than 
$1-\frac{(2mn)^{3/2}}{d^{1/4}}$. 
\end{corollary}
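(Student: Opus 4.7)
The plan is to derive the corollary as a direct consequence of Lemma~\ref{optWLRA}, by a judicious choice of the tolerance $\epsilon$ combined with the integrality of $|E^*|$. Intuitively, if $p^*$ is pinned within an interval of length $\epsilon<1$ ending at the integer $|E|-|E^*|$, then any objective value approximating $p^*$ to within $1-\epsilon$ still lies in a length-one interval containing that unique integer, and we can read off $|E^*|$ by rounding.

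Concretely, I would first set $\epsilon = (2mn)^{3/2}/d^{1/4}$. The hypothesis $d\geq (2mn)^6$ gives $\epsilon \leq 1$, and the required condition $d\geq 2^6|E|^6/\epsilon^4$ of Lemma~\ref{optWLRA} follows from $|E|\leq mn$, since with this $\epsilon$ one has $2^6|E|^6/\epsilon^4 = d\,(|E|/(mn))^6 \leq d$. Lemma~\ref{optWLRA} then yields
\[
|E|-|E^*|-\epsilon \;<\; p^* \;\leq\; |E|-|E^*|.
\]
Suppose now, for contradiction, that a polynomial-time algorithm produces $(u,v)$ whose objective value $\tilde{p}:=\|M-uv^T\|_W^2$ satisfies $\tilde{p}-p^*<1-\epsilon$. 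Combining with the above bounds gives $\tilde{p} \in \bigl(|E|-|E^*|-\epsilon,\; |E|-|E^*|+1-\epsilon\bigr)$, an open interval of length one whose left endpoint $|E|-|E^*|-\epsilon$ is not an integer (as soon as $\epsilon<1$), so it contains the unique integer $|E|-|E^*|$. A single rounding operation on $\tilde{p}$ therefore recovers $|E|-|E^*|$, hence $|E^*|$, yielding a polynomial-time algorithm for \eqref{MBP} and contradicting the NP-hardness established in Section~\ref{MBPsec}.

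The reduction constructing $(M,W)$ from the \eqref{MBP} instance is clearly polynomial-time (the weight matrix takes only two values, and $d$ has polynomial bit-length when chosen as $(2mn)^6$). The only genuine issue is the borderline case $d=(2mn)^6$, for which $\epsilon=1$ and the claimed accuracy collapses to $0$: the statement then reduces to NP-hardness of solving \eqref{WLRA1} exactly, which was already noted in the paragraph preceding the corollary. Apart from this boundary consideration, the argument is essentially a bookkeeping exercise: Lemma~\ref{optWLRA} does the heavy lifting of controlling $p^*$, and the integrality of $|E^*|$ does the rest, so no new technical obstacle arises.
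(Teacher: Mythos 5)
Your proposal is correct and follows essentially the same route as the paper: choose $\epsilon = (2mn)^{3/2}/d^{1/4}$, invoke Lemma~\ref{optWLRA} via $|E|\leq mn$, and use the integrality of $|E|-|E^*|$ to recover $|E^*|$ from the approximate objective value by the rounding $|E^*| = |E| - \lceil \bar{p}+\epsilon\rceil + 1$. The only differences are cosmetic (your explicit verification that $2^6|E|^6/\epsilon^4 \leq d$, and your remark on the borderline case $\epsilon=1$, which the paper glosses over).
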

\begin{proof}
Let $d \geq {(2mn)^6}$, 
$0 < \epsilon = \frac{(2mn)^{3/2}}{d^{1/4}} < 1$, and 
$(\bar{u},\bar{v})$ be an approximate solution of \eqref{WLRA1} with objective function accuracy  $(1-\epsilon)$, i.e., $p^* \leq \bar{p} = ||M-\bar{u}\bar{v}^T||_W^2 \leq p^* + 1 - \epsilon$. 
Since $d = \frac{(2mn)^6}{\epsilon^4} \geq \frac{(2|E|)^6}{\epsilon^4}$, Lemma~\ref{optWLRA} applies and we have   
\[
|E|-|E^*|-\epsilon \; < \; p^* \leq \; \bar{p} \; \leq \; p^* + 1 - \epsilon \; \leq \; |E|-|E^*| + 1 - \epsilon. 
\]
We finally observe that knowing $\bar{p}$ allows to recover $|E^*|$, which is NP-hard. In fact, adding $\epsilon$ to the above inequalities  gives $|E|-|E^*| < \bar{p} + \epsilon \leq  |E|-|E^*| + 1$, 
and therefore 
\[
|E^*| = |E| - \Big\lceil \bar{p} + \epsilon \Big\rceil + 1. 
\] 
\end{proof}

We are now in position to prove Theorem~\ref{WLRAnphard}, which deals with the hardness of rank-one (WLRA) with bounded weights.  

\begin{proof}[\textbf{Theorem~\ref{WLRAnphard}}] 
Let us use Corollary~\ref{th1cor} with $W \in \{1,d\}^{m \times n}$, and define $W' =  \frac{1}{d} W \in \{\frac{1}{d},1\}^{m \times n}$. 
 Clearly, replacing $W$ by $W'$ in \eqref{WLRA1} 
 simply amounts to multiplying the objective function by $\frac{1}{d}$, with $||M-uv^T||_{W'}^2 =  \frac{1}{d} ||M-uv^T||_{W}^2$. 
 Taking $d^{1/4} = 2 (2mn)^{3/2}$ in Corollary~\ref{th1cor}, we obtain that for $M \in \{0,1\}^{m \times n}$ and $W \in ]0,1]^{m \times n}$, it is NP-hard to find an approximate solution of rank-one \eqref{WLRA} with objective function accuracy less than 
$\frac{1}{d} \Big(1-\frac{(2mn)^{3/2}}{d^{1/4}} \Big) = \frac{1}{2d} = 2^{-11} (mn)^{-6}. $
\end{proof}

\begin{remark}
The above bounds on $d$ have been crudely estimated, and can be improved. Our main goal here was to show existence of a polynomial-time reduction from \eqref{MBP} to rank-one \eqref{WLRA}. 
\end{remark}

\begin{remark} \label{Remnmu}
Using the same construction as in \cite[Theorem 3]{GG09}, 
this rank-one NP-hardness result can be generalized to any factorization rank, i.e., approximate \eqref{WLRA} for any fixed rank $r$ is NP-hard. The idea is the following: given a bipartite graph $G_b$ with biadjacency matrix $M \in \{0,1\}^{m \times n}$, we construct a larger bipartite graph $G_b'$  which is made of $r$ disconnected copies of $G_b$, whose biadjacency matrix is therefore given by 
\[
M' =  \left( \begin{array}{cccc}   M & \mathbf{0}_{m \times n} & \dots & \mathbf{0}_{m \times n} \\
\mathbf{0}_{m \times n} & M & & {\mathbf{0}}_{m \times n} \\
\vdots & & \ddots & \vdots \\
\mathbf{0}_{m \times n} & \dots & & M
 \end{array} \right) \in  \{0,1\}^{rm \times rn}. 
\]
Clearly, no biclique in this graph can be larger than a maximum biclique in $G_b$, and there are (at least) $r$ disjoint bicliques with such maximum size in $G'_b$. Letting $(U,V) \in \mathbb{R}^{rm \times r} \times \mathbb{R}^{r \times rn}$ be an optimal solution of the rank-$r$  \eqref{WLRA} problem with $M'$ above and weights $W' = M' + d(\mathbf{1}_{rm \times rn} - M')$ defined as before, 
it can be shown that, for $d$ sufficiently large, each rank-one factor $U_{:k}V_{:k}^T$ must correspond to a maximum biclique of $G_b$.  
\end{remark}

\subsection{Low-Rank Matrix Approximation with Missing Data} \label{LRMD}

The above NP-hardness proof does not cover the case when $W$ is binary, corresponding to missing data in the matrix to be approximated (or to low-rank matrix completion with noise). 
This corresponds to the following problem 
\begin{equation} 
 \inf_{
U  \in \mathbb{R}^{m \times r}, V \in \mathbb{R}^{n \times r}} 
\quad
||M-UV^T||_W^2 = \sum_{ij} W_{ij} (M-UV^T)_{ij}^2\;, \quad W \in \{0,1\}^{m \times n}. 
\nonumber 
\end{equation}
In the same spirit as before, we consider the following rank-one version of the problem 
\begin{equation} \tag{MD-1d}  \label{PCAMD1} 
p^* = \inf_{u \in \mathbb{R}^{m}, v \in \mathbb{R}^{n}}  ||M-uv^T||_{W}^2,
\end{equation}
with input data matrices $M$ and $W$ defined as follows
\[
M = \left( \begin{array}{c|c}
     M_b  &   \mathbf{0}_{s \times Z}  \\  \hline
     \mathbf{0}_{Z \times t}    & d I_{Z}  \\ \end{array} \right)  
     \text{ and }
W = \left( \begin{array}{c|c}
     \mathbf{1}_{s \times t}  & B_1    \\ \hline
     B_2    & I_{Z}  \\ \end{array} \right), 
\]
where $M_b \in \{0,1\}^{s \times t}$ is the biadjacency matrix of the bipartite graph $G_b = (V,E)$, $d > 1$ is a parameter, $Z = st-|E|$ is the number of zero entries in $M_b$, and $m = s+Z$ and $n = t+Z$ are the dimensions of $M$ and $W$.  

Binary matrices $B_1 \in \{0,1\}^{s \times Z}$ and $B_2 \in \{0,1\}^{Z \times t}$ are constructed as follows: assume the $Z$ zero entries of $M_b$ can be enumerated as 
\[
\{M_b(i_1,j_1), M_b(i_2, j_2), \ldots, M_b(i_Z,j_Z)\},
\] 
and let $k_{ij}$ be the (unique) index $k$ ($1 \le k \le Z)$ such that $(i_k,j_k)=(i,j)$ (therefore $k_{ij}$ is only defined for pairs $(i,j)$ such that $M_b(i,j)=0$, and establishes a bijection between these pairs and the set $\{1, 2, \ldots, Z\}$). We now define matrices $B_1$ and $B_2$ as follows: for every index $1 \le k_{ij} \le Z$, we have 
\[ B_1(i,k_{ij}) = 1, B_1(i',k_{ij}) = 0\ \forall i' \neq i \text{ and } B_2(k_{ij},j) = 1, B_2(k_{ij},j') = 0\ \forall j' \neq j \,.\]
Equivalently, each column of $B_1$ (resp.\@ row of $B_2$) corresponds to a different zero entry $M_b(i,j)$, and contains only zeros except for a one at position $i$ within the column (resp.\@ at position $j$ within the row). Hence the matrix $B_1$ (resp.\@ $B_2$) contains only zero entries except $Z$ entries equal to one, one in each column (resp.\@ row).

In the case of Example~\ref{examp}, we get
\[
M = \left( \begin{array}{c|c}
       \begin{array}{ccc} 1 & 0 & 1 \\ 
       0 & 1 & 1 \\ 
       1 & 1 & 1 \end{array} & \mathbf{0}_{3 \times 2}  \\ \hline
      \mathbf{0}_{2 \times 3}	   & d \, I_{2}  \\  
    \end{array} \right) \; \textrm{ and } \;
W = \left( \begin{array}{c|c}
      \mathbf{1}_{3 \times 3} & \begin{array}{cc} 1 & 0 \\ 0 & 1 \\ 0 & 0 \end{array}  \\ \hline
      \begin{array}{ccc} 0 & 1 & 0  \\ 1 & 0 & 0 \\ \end{array}	   & I_{2}  \\  
    \end{array} \right),
\]
i.e., the matrix to be approximated can be represented as 
\begin{equation} \label{matpi}
 \left( \begin{array}{ccc|cc}
1 & 0 & 1 & 0 & ? \\
0 & 1 & 1 & ? & 0 \\
1 & 1 & 1 & ? & ?  \\ \hline
? & 0 & ? & d & ? \\
0 & ? & ? & ? & d 
    \end{array} \right).
\end{equation}

For any feasible solution $(u,v)$ of \eqref{PCAMD1}, we also note
\[
u = \left( \begin{array}{cc}
    u^{(b)}   \\
 	  u^{(d)}    \\
\end{array} \right) \in \mathbb{R}^m, \; u^{(b)} \in \mathbb{R}^s \textrm{ and } u^{(d)} \in \mathbb{R}^{Z},
\]
\[
v = \left( \begin{array}{cc}
    v^{(b)}   \\
 	  v^{(d)}     \\
\end{array} \right) \in \mathbb{R}^n, \; v^{(b)} \in \mathbb{R}^t \textrm{ and } v^{(d)} \in \mathbb{R}^{Z}.
\]

We will show that this formulation ensures that, as $d$ increases, 
the zero entries of matrix $M_b$ (the biadjacency matrix of $G_b$ which appears as the upper left block of matrix $M$) 
have to be approximated with smaller values. Hence, as for \eqref{WLRA1}, we will be able to prove that the optimal value $p^*$ of \eqref{PCAMD1} will have to get close to the minimal value $|E|-|E^*|$ of \eqref{MBP}, implying NP-hardness of its computation.  

Intuitively, when $d$ is large, the lower right matrix $dI_{Z}$ of $M$ will have to be approximated by a matrix with large diagonal entries, since they are weighted by unit entries in matrix $W$. Hence $u^{(d)}_{k_{ij}}v^{(d)}_{k_{ij}}$ has to be large for all $1 \leq k_{ij} \leq Z$. We then have at least either $u^{(d)}_{k_{ij}}$ or $v^{(d)}_{k_{ij}}$ large for all $k_{ij}$ (recall that each $k_{ij}$ corresponds to a zero entry in $M$ at position $(i,j)$, cf.\@ definition of $B_1$ and $B_2$ above).  
By construction, we also have two entries $M(s+k_{ij},j) = 0$ and $M(i,t+k_{ij}) = 0$ with unit  weights corresponding to the nonzero entries $B_1(i,k_{ij})$ and $B_2(k_{ij},j)$, which then also  have to be approximated by small values. If $u^{(d)}_{k_{ij}}$ (resp.\@ $v^{(d)}_{k_{ij}}$) is large, then $v^{(b)}_j$ (resp.\@ $u^{(b)}_i$) will have to be small since $u^{(d)}_{k_{ij}}v^{(b)}_j \approx 0$ (resp.\@ $u^{(b)}_iv^{(d)}_{k_{ij}} \approx 0$). Finally, either $u^{(b)}_i$ or $v^{(b)}_j$ has to be small, implying that $M_b(i,j)$ is approximated by a small value, because $(u^{(b)},v^{(b)})$ can bounded independently of the value of $d$. \\

We now proceed as in Section~\ref{WLRAcomp}. Let us first give an upper bound for the optimal value $p^*$ of \eqref{PCAMD1}. 
\begin{lemma} \label{optp} 
For $d > 1$, the optimal value $p^*$ of \eqref{PCAMD1} is bounded above by $|E|-|E^*|$, i.e., 
\begin{equation} \label{upperbound}
p^* =  \inf_{u \in \mathbb{R}^{m}, v \in \mathbb{R}^{n}}  ||M-uv^T||_{W}^2 \leq |E|-|E^*|.
\end{equation}
\end{lemma}
\begin{proof}
Let us build the following feasible solution $(u,v)$ of \eqref{PCAMD1}: $(u^{(b)},v^{(b)})$ is a (binary) optimal solution of \eqref{MBP} and $(u^{(d)},v^{(d)})$ is defined as\footnote{Notice that this construction is not symmetric, and the variant  using $u^{(b)}$ instead of $v^{(b)}$ to define $u^{(d)}$ and $v^{(d)}$  is also possible.} 
\begin{equation} \label{uvkij}
u^{(d)}_{k_{ij}} = \left\{ 
\begin{array}{ccc} 
d^K & \textrm{ if } & v^{(b)}_j = 0, \\
d^{1-K} & \textrm{ if } & v^{(b)}_j = 1, \\
\end{array} 
\right.
\textrm{ and } \;
v^{(d)}_{k_{ij}} = \left\{ 
\begin{array}{ccc} 
d^{1-K} & \textrm{ if } & v^{(b)}_j = 0, \\
d^{K} & \textrm{ if } & v^{(b)}_j = 1, \\
\end{array} 
\right.
\end{equation}
where $K$ is a real parameter and $k_{ij}$ is the index of the column of $B_1$ and the row of $B_2$ corresponding to the zero entry $(i,j)$ of $M_b$ (i.e., $(i,j)=(i_{k_{ij}}, j_{k_{ij}})$). 

We have that 
\[
(u{v}^T) \circ W = \left( \begin{array}{cc}
     u^{(b)} {v^{(b)}}^T  & D_1 \vspace{0.1cm}   \\
     D_2    & d I_{Z}  \\ \end{array} \right), 
\]
where $\circ$ is the component-wise (or Hadamard) product between two matrices, and matrices $D_1$ and $D_2$ satisfy 
\[
\left\{ 
\begin{array}{cccc} 
D_i(l,p)  & = & 0  & \textrm{ if }  B_i(l,p) = 0, \\
D_i(l,p) & \in & \{0, d^{1-K}\} &  \textrm{ if }  B_i(l,p) = 1, \\
\end{array} 
\right. \quad i = 1, 2.
\] 
In fact, let us analyze the four blocks of $(u{v}^T) \circ W$: 
\begin{enumerate}
\item Upper-left: the upper-left block of $W$ and $uv^T$ are respectively the all-one matrix and $u^{(b)} {v^{(b)}}^T$. 
\item Lower-right: since the lower-right block of $W$ is the identity matrix, we only need to consider the diagonal entries of the lower-right block of $uv^T$, which are given by  $u^{(d)}_{k_{ij}} v^{(d)}_{k_{ij}} = d$ for $k_{ij}= 1, 2, \dots, Z$, cf.\@ Equation~\eqref{uvkij}. 
\item Upper-right and lower-left: by definition of $B_1$ and $B_2$, the only entries of $D_1$ and $D_2$ which may be different from zero are given by 
\[
D_1(i,k_{ij}) = u^{(b)}_i v^{(d)}_{k_{ij}} 
\; \text{ and } \; 
D_2(k_{ij},j) = u^{(d)}_{k_{ij}} v^{(b)}_j, 
\] 
for all $(i,j)$ such that  $M_b(i,j) = 0$. By construction, we have either $v^{(b)}_j = 0$ or $v^{(b)}_j = 1$.  
If $v^{(b)}_j = 0$, 
then $v^{(d)}_{k_{ij}} = d^{1-K}$ by Equation~\eqref{uvkij} and 
we  have  $D_1(i,k_{ij}) = u^{(b)}_i v^{(d)}_{k_{ij}} \in \{0, d^{1-K}\}$ and $D_2(k_{ij},j) = 0$. 
If $v^{(b)}_j = 1$, we have  $u^{(b)}_i = 0$ (since $M_b(i,j) = 0$) and $u^{(d)}_{k_{ij}} = d^{1-K}$ by Equation~\eqref{uvkij} whence  $D_1(i,k_{ij}) = 0$ and $D_2(k_{ij},j) = d^{1-K}$. 
\end{enumerate} 
Finally, $D_1$ and $D_2$ have at most $Z$  non-zero entries (recall $Z$ is the number of zero entries in $M_b$), which are all equal to $d^{1-K}$; therefore, 
\begin{equation} \label{Kv}
p^* \leq ||M-u {v}^T||_{W}^2 \leq |E|-|E^*| + 2Z d^{2(1-K)}, \quad \forall K.
\end{equation}
Since $d>1$, taking the limit $K \rightarrow +\infty$ gives the result. 
\end{proof}

\begin{example} \label{ex3}
Let us illustrate the construction of Lemma~\ref{optp} on the matrix from Example~\ref{examp}, which contains two maximum bicliques with 4 edges, including the one corresponding to $u^{(b)} = (0,1,1)^T$ and $v^{(b)} = (0,1,1)^T$. Taking  $u = (0, 1, 1, d^{1-K}, d^K )^T$ and $v = (0, 1, 1, d^K, d^{1-K})^T$, we obtain 
\begin{equation} \nonumber
 \left( \begin{array}{ccc|cc}
1 & 0 & 1 & 0 & ? \\
0 & 1 & 1 & ? & 0 \\
1 & 1 & 1 & ? & ?  \\ \hline
? & 0 & ? & d & ? \\
0 & ? & ? & ? & d 
    \end{array} \right) \approx uv^T = 
     \left( \begin{array}{ccc|cc}
0 & 0 & 0 & 0 & 0 \\
0 & 1 & 1 & d^K & d^{1-K} \\
0 & 1 & 1 & d^K & d^{1-K}  \\ \hline
0 & d^{1-K} & d^{1-K} & d & d^{2(1-K)} \\
0 & d^K & d^K & d^{2K} & d 
    \end{array} \right), 
\end{equation} 
with $||M-uv^T||_W^2 = {(|E|-|E^*|)} + 2 d^{2(1-K)} = 3 + 2 d^{2(1-K)}$, which is less than the bound $3 + 4 d^{2(1-K)}$ guaranteed by Equation~\eqref{Kv}. \vspace{0.1cm} 
\end{example}

We now prove a property similar to Lemma~\ref{lembic} for any solution with objective value smaller that $|E|$. 
\begin{lemma} \label{lembic2} Let $d > \sqrt{|E|}$ and $(i,j)$ be such that $M_b(i,j) = 0$, then the following holds for any pair $(u,v)$ such that $||M-u {v}^T||_{W}^2 \leq |E|$:
\begin{equation} \label{colrowB}
\min\Big( \, \max_{1 \leq k \leq n} |u_i v_k|, \, \max_{1 \leq p \leq m} |u_p v_j| \Big) 
\leq \frac{\sqrt{2} \, |E|^{\frac{3}{4}}}{\big(d-\sqrt{|E|}\big)^{\frac{1}{2}}}. 
\end{equation}
\end{lemma}
\begin{proof} Without loss of generality we set $||u^{(b)}||_2 = ||v^{(b)}||_2$ by scaling $u$ and $v$ without changing $uv^T$. Observing that
\begin{eqnarray*}
||u^{(b)}||_2||v^{(b)}||_2 - \sqrt{|E|} = ||u^{(b)}v^{(b)}{}^T||_F - ||M_b||_F 
& \leq &  ||M_b-u^{(b)}v^{(b)}{}^T||_F \\
& \leq & ||M-u {v}^T||_{W} \leq \sqrt{|E|},
\end{eqnarray*}
we have $||u^{(b)}||_2||v^{(b)}||_2 \leq 2\sqrt{|E|}$, and $||u^{(b)}||_2 =||v^{(b)}||_2 \leq \sqrt{2}|E|^{\frac{1}{4}}$. \\
Assume $M_b(i,j)$ is zero for some pair $(i,j)$ and let $k = k_{ij}$ denote the index of the corresponding column of $B_1$ and row of $B_2$ (i.e., such that $B_1(i,k) = B_2(k,j) = 1$). 
By construction, $u^{(d)}_kv^{(d)}_k$ has to approximate $d$ in the objective function. This implies $(u^{(d)}_kv^{(d)}_k-d)^2 \leq |E|$ and then
\[
u^{(d)}_kv^{(d)}_k \geq d - \sqrt{|E|} > 0.
\]
Suppose $|u^{(d)}_k|$ is greater than $|v^{(d)}_k|$ (the case where $|v^{(d)}_k|$ is greater than $|u^{(d)}_k|$ is similar), which implies $|u^{(d)}_k| \geq (d-|E|^{\frac{1}{2}})^{\frac{1}{2}}$. Moreover, since $B_2(k,j)$ is a unit weight, we have that $u^{(d)}_kv_j$ has to approximate zero in the objective function, implying 
\[
(u^{(d)}_k v_j - 0) ^2 \leq {|E|} \quad \Rightarrow \quad |u^{(d)}_k v_j| \leq \sqrt{|E|}.
\]
Hence 
\begin{equation} \label{vj}
|v_j| \leq \frac{\sqrt{|E|}}{|u^{(d)}_k|} \leq  \frac{|E|^{\frac{1}{2}}}{\big(d-\sqrt{|E|}\big)^{\frac{1}{2}}},
\end{equation}
and since $(\max_{1 \leq p \leq m} |u_p|)$ is bounded by $||u^{(b)}||_2 \leq \sqrt{2}|E|^{\frac{1}{4}}$, the proof is complete. 
\end{proof} \vspace{0.1cm}

Using Lemma~\ref{keylem}, we can now derive a lower bound for the value of $p^*$.  
\begin{lemma} \label{PCAprec} 
Let $0 < \epsilon \leq 1$. For any value of parameter $d$ strictly greater than $\frac{8 |E|^{\frac{7}{2}}}{\epsilon^2} + |E|^{\frac{1}{2}}$, the infimum $p^*$ of \eqref{PCAMD1} satisfies
\[
|E|-|E^*| - \epsilon < p^*. 
\]
\end{lemma}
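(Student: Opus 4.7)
The plan is to combine Lemma~\ref{lembic2} with Lemma~\ref{keylem}, applying the latter to the upper-left $s \times t$ block, and then to choose the threshold on $d$ so that the resulting error term $2c|E|$ drops below $\epsilon$.

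If $|E|-|E^*|=0$ there is nothing to prove, so I assume $p:=|E|-|E^*|>0$. Set $c = \sqrt{2}\,|E|^{3/4}/\sqrt{d-\sqrt{|E|}}$; a direct computation shows that the hypothesis $d > 8|E|^{7/2}/\epsilon^2 + \sqrt{|E|}$ is equivalent to $c < \epsilon/(2|E|)$, which together with $\epsilon\leq 1$ also yields $c<1$. I will show that every $(u,v)$ satisfies $||M-uv^T||_W^2 \geq p - 2c|E|$, whence $p^* \geq p - 2c|E| > p - \epsilon$, as required. Pairs with $||M-uv^T||_W^2 > |E|$ satisfy this trivially since $p - 2c|E| \leq p \leq |E|$, so I may restrict attention to those with $||M-uv^T||_W^2 \leq |E|$. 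For these, Lemma~\ref{lembic2} yields
\[
\min\!\Big(\max_{1 \leq k \leq n}|u_iv_k|,\; \max_{1 \leq p \leq m}|u_pv_j|\Big) \leq c
\]
for every $(i,j)$ such that $M_b(i,j)=0$.

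Restricting both maxima to the smaller ranges $k\in\{1,\dots,t\}$ and $p\in\{1,\dots,s\}$ only decreases them, so the same inequality holds for the biclique parts $(u^{(b)},v^{(b)})$. Hence the hypotheses of Lemma~\ref{keylem} are met for the input $(M_b, \mathbf{1}_{s \times t}, u^{(b)}, v^{(b)})$: the all-ones weight matrix trivially has $W_{ij}=1$ wherever $M_b(i,j)=1$, and it matches the upper-left block of $W$. That lemma gives
\[
\sum_{\substack{1 \leq i \leq s \\ 1 \leq j \leq t}} \bigl(M_b - u^{(b)}{v^{(b)}}^T\bigr)_{ij}^2 \;>\; p(1 - 2c),
\]
and since this sum is a summand of $||M-uv^T||_W^2$ (the upper-left block of $W$ is all ones and $M$ agrees with $M_b$ there), we conclude
\[
||M-uv^T||_W^2 \;>\; p(1-2c) \;\geq\; p - 2c|E|,
\]
using $p \leq |E|$ in the last step. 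This closes the argument.

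The main step to watch is transferring the condition of Lemma~\ref{lembic2} from the full pair $(u,v)$ to the biclique part $(u^{(b)},v^{(b)})$; once this is done, invoking Lemma~\ref{keylem} on the upper-left block and calibrating $d$ so that $2c|E| < \epsilon$ is a purely algebraic exercise.
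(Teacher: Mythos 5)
Your proof is correct and follows essentially the same route as the paper's: Lemma~\ref{lembic2} supplies the bound $c=\beta$, Lemma~\ref{keylem} applied to the all-ones upper-left block gives the lower bound $p(1-2c)$, and the threshold on $d$ is exactly the calibration $2c|E|<\epsilon$. The only differences are presentational: you argue directly via a uniform lower bound on the objective over all $(u,v)$ (splitting off the trivial case $\|M-uv^T\|_W^2>|E|$), whereas the paper argues by contradiction from $p^*\le p-\epsilon$ using the definition of the infimum, and you make explicit the (valid) restriction of the maxima in \eqref{colrowB} to the index ranges of $(u^{(b)},v^{(b)})$, which the paper leaves implicit.
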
 
\begin{proof}
Let us note $p = |E| - |E^*|$. If $p=0$, the result is trivial since $p^* = 0$. 
Otherwise, suppose $p^* \leq p-\epsilon$ and let $\beta = \frac{\sqrt{2} \, |E|^{\frac{3}{4}}}{\big(d-\sqrt{|E|}\big)^{\frac{1}{2}}}$. First observe that $d > \frac{8 |E|^{\frac{7}{2}}}{\epsilon^2} + |E|^{\frac{1}{2}}$ is equivalent to $2 |E| \beta < \epsilon$.   
Then, by continuity of \eqref{PCAMD1}, for any $\delta$ such that $\delta < \epsilon$, there exists a pair $(u,v)$ such that 
\[
||M_b-u^{(b)}v^{(b)}{}^T||_W^2 \leq ||M-uv^T||_{W}^2 \leq p-\delta \leq |E|. 
\]
In particular, let us take $\delta = 2 |E| \beta  < \epsilon$. Observe that $\beta \leq 1$ as soon as $d \geq 2 |E|^{\frac{3}{2}}+|E|^{\frac{1}{2}}$ (which is guaranteed because $0 < \epsilon \leq 1$). 
By Lemma~\ref{lembic2} and Lemma~\ref{keylem} (applied on matrix $M_b$ and the solution $(u^{(b)},v^{(b)})$), we then have 
\[
p - 2\beta p <  ||M_b-u^{(b)}v^{(b)}{}^T||_W^2 \leq ||M-uv^T||_W^2 \leq p-\delta. 
\]
Dividing the above inequalities by $p > 0$, we obtain 
\[
1-2\beta <  1-\frac{\delta}{p} < 1-\frac{\delta}{|E|} \Rightarrow \delta < 2 |E| \beta, 
\]
a contradiction. 
\end{proof}

\begin{corollary} \label{corth2}
For any $d > {8 (mn)^{7/2}} + \sqrt{mn}$, $M \in \{0,1,d\}^{m \times n}$, and $W \in \{0,1\}^{m \times n}$, it is NP-hard to find an approximate solution of rank-one \eqref{WLRA} with  objective function  accuracy $1-\frac{2\sqrt{2} (mn)^{7/4}}{(d-\sqrt{mn})^{1/2}}$. 
\end{corollary}
\begin{proof}
Let $d > 8(mn)^{7/2} + \sqrt{mn}$,   
$0 < \epsilon = \frac{2\sqrt{2} (mn)^{7/4}}{(d - \sqrt{mn})^{1/2}} < 1$, and 
$(\bar{u},\bar{v})$ be an approximate solution of \eqref{WLRA1} with absolute error $(1-\epsilon)$, i.e., $p^* \leq \bar{p} = ||M-\bar{u}\bar{v}^T||_W^2 \leq p^* + 1 - \epsilon$. 
Lemma~\ref{PCAprec} applies because $d = \frac{8 (mn)^{7/2}}{\epsilon^2} + \sqrt{mn} \geq \frac{8 (st)^{7/2}}{\epsilon^2} + \sqrt{st} \geq \frac{8 |E|^{7/2}}{\epsilon^2} + |E|^{1/2}$. Using Lemmas~\ref{optp} and \ref{PCAprec}, the rest of the proof is identical as the one of  Theorem~\ref{WLRAnphard}.  Since the reduction from \eqref{MBP} to \eqref{PCAMD1} is polynomial (description of matrices $W$ and $M$ has polynomial length, since the increase in matrix dimensions from $M_b$ to $M$ is polynomial), we conclude that finding such an approximate solution for \eqref{PCAMD1} is NP-hard. 
\end{proof}

We can now easily derive Theorem~\ref{LRAMDnphard}, which deals with the hardness of rank-one (WLRA) with a bounded matrix $M$.  

\begin{proof}[\textbf{Theorem~\ref{LRAMDnphard}}] 
Replacing $M$ by $M' = \frac{1}{d} M$ in \eqref{PCAMD1} gives an equivalent problem with objective function multiplied by $\frac{1}{d^2}$, since 
$\frac{1}{d^2} ||M-uv^T||_W^2 = ||M'-\frac{uv^T}{d}||_W^2$. 
Taking $d =  2^5 (mn)^{7/2} +\sqrt{mn}$ in Corollary~\ref{corth2}, we find that it is NP-hard to compute an approximate solution of rank-one \eqref{WLRA} for $M \in [0,1]^{m \times n}$ and $W \in \{0,1\}^{m \times n}$, and  with objective function accuracy less than 
$\frac{1}{d^2} \Big(1-\frac{2\sqrt{2} (mn)^{7/4}}{(d-\sqrt{mn})^{1/2}} \Big) 
= \frac{1}{2 d^2} \geq 2^{-12} (mn)^{-7}$. 
\end{proof}

\section{Concluding Remarks}

In this paper, we have studied the complexity of the weighted low-rank approximation  problem (WLRA), and proved that computing an approximate solution with some prescribed accuracy is NP-hard, already in the rank-one case,  both for positive and binary weights (the latter also corresponding to low-rank matrix completion with noise, or PCA with missing data).  \\

The following more general problem is sometimes also referred to as WLRA: 
\begin{equation} \label{gWLRA}
\inf_{U  \in \mathbb{R}^{m \times r}, V \in \mathbb{R}^{r \times n}} \quad
||M-UV||_{(P)}^2, 
\end{equation} 
where $||A||_{(P)}^2 = \text{vec}(A)^T P \text{vec}(A)$, with $\text{vec}(A)$ a vectorization of matrix $A$ and $P$ an $mn$-by-$mn$ positive semidefinite matrix, see \cite{S06} and the references therein. Since our WLRA formulation corresponds to the special case of a diagonal (nonnegative) $P$, our hardness results also apply to Problem~\eqref{gWLRA}.

It is also worth pointing out that, when the data matrix $M$ is nonnegative,  any optimal solution to rank-one \eqref{WLRA} can be assumed to be nonnegative (see discussion for Example~\ref{examp}). 
Therefore, all the complexity results of this paper apply to the weighted nonnegative matrix factorization problem (weighted NMF), 
 which is the following low-rank matrix approximation problem with nonnegativity constraints on the factors 
\begin{equation} \nonumber 
\min_{U \in \mathbb{R}^{m \times r}, V \in \mathbb{R}^{n \times r}}    ||M-UV^T||_W^2  \quad \text{ such that } \quad  U \geq 0, \; V \geq 0.
\end{equation}
Hence, it it is NP-hard to find an approximate solution to \emph{rank-one weighted NMF} (used, e.g., in image processing \cite[Chapter 6]{diep}) and to \emph{rank-one NMF with missing data} (used, e.g.,  for collaborative filtering \cite{CWZ09}). This is in contrast with unweighted rank-one NMF, which is polynomially solvable (e.g., taking the absolute value of the first rank-one factor generated by the singular value decomposition). 
 Note that (unweighted) NMF has been shown to be NP-hard when $r$ is not fixed \cite{Vav} (i.e., when $r$ is part of the input). 

Nevertheless, many questions remain open, including the following: 
\begin{itemize}

\item Our approximation results are rather weak. In fact, they require the objective function accuracy to \emph{increase} with the dimensions of the input matrix, in proportion with $(mn)^{-6}$, which is somewhat counter-intuitive. The reason is twofold: first, independently of the size of the matrix, we needed the objective function value of approximate solutions of problems \eqref{WLRA1} and \eqref{PCAMD1} to be no larger than the objective function of the optimal biclique solution plus one (in order to obtain $|E^*|$ by rounding). Second, parameter $d$ in problems \eqref{WLRA1} and \eqref{PCAMD1} depends on the dimensions of matrix $M$. Therefore, when matrices $W$ or $M$ are rescaled between 0 and 1, the objective function accuracy is affected by parameter $d$, and hence decreases with the dimensions of matrix $M$. Strengthening of these bounds is a topic for further research. 
\item Moreover, as pointed out to us, these results say nothing about the hardness of approximation within a constant multiplicative factor. It would then be interesting to combine our reductions with inapproximability results for the biclique problem (which have yet to be investigated thoroughly, see, e.g., \cite{T08}), or construct reductions from other problems.  
 
\item When $W$ is the matrix of all ones, WLRA can be solved in polynomial-time. We have shown that, when the ratio between the largest and the smallest entry in $W$ is large enough, the problem is NP-hard (Theorem~\ref{WLRAnphard}). It would be interesting to investigate the gap between these two facts, i.e., what is the minimum ratio between the entries of $W$ that leads to an NP-hard WLRA problem? 

\item When $\rank(W) = 1$, WLRA can be solved in polynomial-time (cf.\@ Section~\ref{formu}) while it is NP-hard for a general matrix $W$ (with rank up to $\min(m,n)$). What is the complexity of \eqref{WLRA} if the rank of the weight matrix $W$ is fixed and greater than one, e.g., if $\rank(W) = 2$? 

\item When data is missing, the rank-one matrix approximation problem is NP-hard in general. Nevertheless, it has been observed \cite{CP09} that when the given entries are sufficiently numerous, well-distributed in the matrix, and affected by a relatively low level of noise, the original uncorrupted low-rank matrix can be recovered accurately, with a technique based on convex optimization (minimization of the nuclear norm of the approximation, which can be cast as a semidefinite program). It would then be particularly interesting to analyze the complexity of the problem given additional assumptions on the data matrix, for example on the noise distribution, and deal in particular with situations related to applications. 

\end{itemize}

\subsection*{Acknowledgments}

We thank Chia-Tche Chang for his helpful comments. We are grateful to the insightful comments of the three anonymous reviewers which helped to improve the paper substantially.

\small 

\bibliographystyle{siam}
\bibliography{wlra}

\end{document}